\DeclareMathOperator{\rank}{rank}
\DeclareMathOperator{\Imaginary}{Im}
\DeclareMathOperator{\Real}{Re}
\newcommand{\lp}{\langle}
\newcommand{\rp}{\rangle}
\newcommand{\lv}{\lvert}
\newcommand{\rv}{\rvert}
\newcommand{\lV}{\lVert}
\newcommand{\rV}{\rVert}
\newcommand{\mP}{\mathcal{P}}
\newcommand{\bC}{\mathbb{C}}
\def\sideremark#1{\ifvmode\leavevmode\fi\vadjust{\vbox to0pt{\vss
 \hbox to 0pt{\hskip\hsize\hskip1em
 \vbox{\hsize3cm\tiny\raggedright\pretolerance10000
 \noindent #1\hfill}\hss}\vbox to8pt{\vfil}\vss}}}
\newcommand{\comment}[1]{}
\newtheorem{thm}{Theorem}[section]
\theoremstyle{definition}
\theoremstyle{remark}
\newtheorem{remark}[thm]{Remark}
\numberwithin{equation}{section}
\begin{document}

\title{A Remark on the Kernel of the CR Paneitz operator}
\author{Jeffrey S.\ Case}
\address{Department of Mathematics \\ Princeton University \\ Princeton, NJ 08540}
\email{jscase@math.princeton.edu}
\author{Sagun Chanillo}
\thanks{SC was partially supported by NSF Grant No.\ DMS-1201474}
\address{Department of Mathematics \\ Rutgers University \\ 110 Frelinghuysen Rd., Piscataway, NJ 08854}
\email{chanillo@math.rutgers.edu}
\author{Paul Yang}
\thanks{PY was partially supported by NSF Grant No.\ DMS-1104536}
\address{Department of Mathematics \\ Princeton University \\ Princeton, NJ 08540}
\email{yang@math.princeton.edu}
 \begin{abstract}
For CR structures in dimension three, the CR pluriharmonic functions are characterized
by the vanishing of a third order operator. This third order operator, after composition with the divergence operator, gives the fourth order analogue of the Paneitz operator. In this short note, we give criteria under which the kernel of the CR Paneitz operator contains a supplementary space to the CR pluriharmonic functions.
\end{abstract}
\maketitle

\section{Introduction}
\label{sec:intro}

It is well-known that for a three-dimensional CR manifold, the kernel of the CR Paneitz operator contains the CR pluriharmonic functions.  It is an open question whether there are examples of CR manifolds for which the kernel of the CR Paneitz operator is exactly the CR pluriharmonic functions.  Graham and Lee showed~\cite{GrahamLee1988} that this is true for compact CR manifolds admitting a torsion free contact form.  More recently, the authors showed~\cite{CaseChanilloYang2014} that this is true for real analytic families of CR manifolds for which the space of CR pluriharmonic functions is stable, assuming positivity of certain natural geometric invariants.  In this short note, we give conditions, related to the instability of the space of CR pluriharmonic functions along deformations, which guarantee that the kernel of the CR Paneitz operator contains functions which are not CR pluriharmonic; see Theorem~\ref{thm:2.1} and Theorem~\ref{thm:2.36}.

Throughout this paper, we use the notation and terminology in \cite{Lee1988} unless
otherwise specified.
Let $(M,J,\theta)$ be a closed three-dimensional pseudohermitian manifold,
where $\theta$ is a contact form and $J$ is a CR structure compatible with the
contact bundle $\xi=\ker\theta$.
The CR structure $J$ decomposes $\xi\otimes\bC$ into the $+i$- and $-i$-eigenspaces of $J$, denoted $T_{1,0}$ and $T_{0,1}$, respectively.
The Levi form $\left\langle\  ,\ \right\rangle_{L_\theta}$ is the Hermitian form on
$T_{1,0}$ defined by
$\left\langle Z,W\right\rangle_{L_\theta}=
-i\left\langle d\theta,Z\wedge\overline{W}\right\rangle$.
We can extend $\left\langle\  ,\ \right\rangle_{L_\theta}$ to $T_{0,1}$ by defining
$\left\langle\overline{Z} ,\overline{W}\right\rangle_{L_\theta}=
\overline{\left\langle Z,W\right\rangle}_{L_\theta}$ for all $Z,W\in T_{1,0}$.
The Levi form induces naturally a Hermitian form on the dual bundle of $T_{1,0}$,
denoted by $\left\langle\  ,\ \right\rangle_{L_\theta^*}$,
and hence on all the induced tensor bundles.  By integrating the Hermitian form
(when acting on sections) over $M$ with respect to the volume form $dV=\theta\wedge d\theta$,
we get an inner product on the space of sections of each tensor bundle.
We denote this inner product by $\left\langle \ ,\ \right\rangle$. For example
\begin{equation}\label{21}
\left\langle\varphi ,\psi\right\rangle=\int_{M}\varphi\bar{\psi}\ dV,
\end{equation}
for functions $\varphi$ and $\psi$.

The Reeb vector field $T$ is the unique vector field such that $\theta(T)=1$ and $\theta(T,\cdot)=0$.  Let $Z_1$ be a local frame of $T_{1,0}$ and consider the frame $\left\{T,Z_1,Z_{\bar 1}\right\}$ of $TM\otimes\bC$. Then $\left\{\theta,\theta^1,\theta^{\bar 1}\right\}$,
the coframe dual to $\left\{T,Z_1,Z_{\bar{1}}\right\}$, satisfies
\begin{equation}\label{22}
d\theta=ih_{1\bar 1}\theta^1\wedge\theta^{\bar 1}
\end{equation}
for some positive function $h_{1\bar 1}$. We can always choose $Z_1$
such that $h_{1\bar 1}=1$; hence, throughout this paper, we assume
$h_{1\bar 1}=1$

The pseudohermitian connection of $(J,\theta)$ is 
$\nabla$ on $TM\otimes\bC$ (and extended to tensors) given in terms of a local
frame $Z_1\in T_{1,0}$ by
\begin{equation*}
\nabla Z_1=\theta_1{}^1\otimes Z_1,\quad
\nabla Z_{\bar{1}}=\theta_{\bar{1}}{}^{\bar{1}}\otimes Z_{\bar{1}},\quad
\nabla T=0,
\end{equation*}
where $\theta_1{}^1$ is the $1$-form uniquely determined by the equations
\begin{align*}
d\theta^1&=\theta^1\wedge\theta_1{}^1+\theta\wedge\tau^1 , \\
\tau^1&\equiv 0\mod\theta^{\bar 1} , \\
0&=\theta_1{}^1+\theta_{\bar{1}}{}^{\bar 1} .
\end{align*}
$\theta_{1}{}^{1}$ and $\tau^1$ are called the connection form and the pseudohermitian
torsion, respectively.
Put $\tau^1=A^1{}_{\bar 1}\theta^{\bar 1}$.
The structure equation for the pseudohermitian connection is
\begin{equation*}
d\theta_1{}^1=R\theta^1\wedge\theta^{\bar 1}
+2i\Imaginary (A^{\bar{1}}{}_{1,\bar{1}}\theta^1\wedge\theta),
\end{equation*}
where $R$ is the Tanaka-Webster curvature.

We denote components of covariant derivatives with indices preceded by a comma;
thus we write $A^{\bar{1}}{}_{1,\bar{1}}\theta^1\wedge\theta$.
The indices $\{0, 1, \bar{1}\}$ indicate derivatives with respect to $\{T, Z_1, Z_{\bar{1}}\}$.
For derivatives of a scalar function, we omit the comma;
for example, given a smooth function $\varphi$, we write $\varphi_{1}=Z_1\varphi$ and $\varphi_{1\bar{1}}=
Z_{\bar{1}}Z_1\varphi-\theta_1^1(Z_{\bar{1}})Z_1\varphi$ and $\varphi_{0}=T\varphi$.

Next we recall several natural differential operators occurring in this paper.
For a smooth function $\varphi$, the Cauchy--Riemann operator $\partial_{b}$ is
defined locally by
\[\partial_{b}\varphi=\varphi_{1}\theta^{1} . \]
We write $\bar{\partial}_{b}$ for the conjugate of
$\partial_{b}$. A function $\varphi$ is called CR holomorphic if
$\bar{\partial}_{b}\varphi=0$. The divergence operator $\delta_b$
takes $(1,0)$-forms to functions by
$\delta_b(\sigma_{1}\theta^{1})=\sigma_{1,}{}^{1}$; similarly,
$\bar{\delta}_b(\sigma_{\bar 1}\theta^{\bar 1})=\sigma_{\bar
1,}{}^{\bar 1}$.

The Kohn Laplacian on functions is
\[\Box_{b}=2\bar{\partial}_{b}^{*}\bar{\partial_{b}} . \]
The sublaplacian is the operator $\Delta_b=\Real\Box_b$.  The CR conformal Laplacian acts on functions $\varphi$ by $L\varphi = -\Delta_b\varphi + (1/4)R\varphi$.

The operator $P_3$ defined on functions $\varphi$ by $P_3\varphi=(\varphi_{\bar{1}}{}^{\bar{1}}{}_1+iA_{11}\varphi^1)\theta^1$ characterizes the space $\mP$ of CR pluriharmonic functions as $\mP=\ker P_3$ (see \cite{Lee1988}).  The CR Paneitz operator $P_4$ is defined by
\begin{equation}
\label{eqn:P4_defn}
P_4\varphi=\delta_b(P\varphi).
\end{equation}
Define $Q$ by $Q\varphi=2i(A^{11}\varphi_{1})_{,1}$.  Using the commutation relation $[\Box_b,\overline{\Box}_b]=4i\Imaginary Q$, we see that
\begin{equation*}
\begin{split}
P_4\varphi&=\frac{1}{4}(\Box_{b}\overline{\Box}_{b}-2Q)\varphi\\
&=\frac{1}{8}\big((\overline{\Box}_{b}\Box_{b}+\Box_{b}\overline{\Box}_{b})\varphi+8\Imaginary(A^{11}\varphi_{1})_{1}\big).
\end{split}
\end{equation*}
Hence $P_4$ is a real and symmetric operator. Note that the leading order term of $P_4$ makes it a fourth order hyperbolic operator, thus it is remarkable that it still displays properties of a subelliptic operator.  It follows from~\eqref{eqn:P4_defn} that the CR pluriharmonic functions are contained in the kernel of the CR Paneitz operator.  It is natural to ask whether there is anything else in the kernel of $P_4$.  Hsiao recently showed~\cite{Hsiao2014} that, under the assumption that the structure $(M^3,\theta,J)$ is embedded, there can be
at most a finite dimensional supplementary of the CR pluriharmonics in the kernel of $P_4$. More
precisely, for any embedded structure $(M,J,\theta)$, we have
\begin{equation}
\label{eqn:supplementary}
\ker P_4 = \mP \oplus W
\end{equation}
for $W$ a finite dimensional space, henceforth referred to as the supplementary space.  An elementary proof of this fact is presented as Lemma~2.2 in~\cite{CaseChanilloYang2014}.  We say that the supplementary space exists if $W\not=\{0\}$ in~\eqref{eqn:supplementary}.

In previous work~\cite{CaseChanilloYang2014} on the kernel of the CR Paneitz operator, the authors showed that the supplementary space does not exist under natural conditions on an analytic family of CR manifolds.

\begin{thm}
\label{thm:main_thm}
Let $(M^3,J^t,\theta)$ be a family of embedded CR manifolds for $t\in[-1,1]$ with the following properties.
\begin{enumerate}
\item $J^t$ is real analytic in the deformation parameter $t$.
\item The Szeg{\H o} projectors $S^t\colon F^{2,0}\to(\ker \bar\partial_b^t\subset F^{2,0})$ vary continuously in the deformation parameter $t$ (see~\cite{GarfieldLee1998} for a definition of $F^{2,0}$).
\item For the structure $J^0$ we have $\lp P_4\psi,\psi\rp\geq0$ for all functions $\psi$ and $\ker P_4^0=\mP^0$, the space of CR pluriharmonic functions with respect to $J^0$.
\item There is a uniform constant $c>0$ such that
\begin{equation}
\label{eqn:uniform_curv}
\inf_{t\in[-1,1]} \min_M R^t \geq c > 0 .
\end{equation}
\item The CR pluriharmonic functions are stable for the family $(M^3,J^t,\theta)$.
\end{enumerate}
Then $P_4^t\geq0$ and $\ker P_4^t=\mP^t$ for all $t\in[-1,1]$.
\end{thm}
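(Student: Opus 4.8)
The plan is to argue by connectedness of the parameter interval. Set
\[
  I = \bigl\{\, t \in [-1,1] : P_4^t \geq 0 \text{ and } \ker P_4^t = \mP^t \,\bigr\},
\]
so that hypothesis~(3) gives $0 \in I$ and it suffices to prove that $I$ is both open and closed in $[-1,1]$; connectedness then forces $I = [-1,1]$, which is the conclusion. Everything is controlled by how the data behave as $t$ moves: the coefficients of $P_4^t$ depend real-analytically on $t$ by~(1); the subspaces $\mP^t = \ker P_3^t$ vary continuously by the stability hypothesis~(5) together with continuity of the Szeg{\H o} projectors in~(2); and $\min_M R^t \geq c > 0$ uniformly by~(4).

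The first step is to reformulate membership in $I$ as a coercivity estimate. Since $\mP^t = \ker P_3^t$ and $P_4^t = \delta_b\comp P_3^t$, we always have $\mP^t \subseteq \ker P_4^t$, and since $P_4^t$ is self-adjoint it preserves $(\mP^t)^\perp$. By Lemma~2.2 of~\cite{CaseChanilloYang2014} applied to the embedded structure $(M,J^t,\theta)$, one has $\ker P_4^t = \mP^t \oplus W^t$ with $\dim W^t < \infty$; combined with the closed-range property of $\bar\partial_b^t$ this yields a spectral gap for $P_4^t|_{(\mP^t)^\perp}$ above $0$ whenever $W^t = \{0\}$. I would then show that $t \in I$ if and only if there is $\delta > 0$ with
\[
  \lp P_4^t\varphi, \varphi\rp \geq \delta\,\|\varphi\|^2 \qquad \text{for all } \varphi \perp \mP^t .
\]
Such an estimate forces $P_4^t \geq 0$ (as $P_4^t$ kills $\mP^t$) and $W^t = \{0\}$; conversely, $t \in I$ gives such a $\delta = \delta(t) > 0$ by the gap. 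Writing $\delta(t)$ for the optimal constant (with $\delta(t) \leq 0$ if none is positive), the problem becomes: show $\{t : \delta(t) > 0\}$ is open and closed.

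Both follow from continuity of $t \mapsto P_4^t|_{(\mP^t)^\perp}$ plus one quantitative input. Openness: near $t_0 \in I$ the bottom of the spectrum of $P_4^t|_{(\mP^t)^\perp}$ cannot jump down, since $P_4^t$ and $(\mP^t)^\perp$ vary continuously, so $\delta(t) > 0$ for $t$ near $t_0$; here real-analyticity in $t$ can be used via Kato--Rellich perturbation theory for the finite-dimensional piece of $\ker P_4^t$. Closedness: if $t_n \to t_\infty$ with $t_n \in I$, then $\lp P_4^{t_n}\psi,\psi\rp \geq 0$ for fixed smooth $\psi$ passes to the limit to give $P_4^{t_\infty} \geq 0$, and for fixed smooth unit $\varphi \perp \mP^{t_\infty}$ one compares with $\varphi_n \perp \mP^{t_n}$, $\varphi_n \to \varphi$ (using continuity of $\mP^t$), so $\lp P_4^{t_\infty}\varphi, \varphi\rp = \lim_n\lp P_4^{t_n}\varphi_n,\varphi_n\rp \geq \liminf_n \delta(t_n)$. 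This is where~(4) is essential: I would prove a lemma giving $\delta(t) \geq \Phi\bigl(\min_M R^t\bigr) > 0$ for every embedded member of the family with $W^t = \{0\}$, where $\Phi$ is a fixed positive increasing function (other geometric quantities being uniformly bounded over the compact interval), so that $\delta(t_n) \geq \Phi(c) > 0$; then $\lp P_4^{t_\infty}\varphi,\varphi\rp \geq \Phi(c) > 0$ for all such $\varphi$, forcing $W^{t_\infty} = \{0\}$ and $t_\infty \in I$.

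The main obstacle is the curvature-dependent coercivity lemma. Because $P_4$ is fourth order but only subelliptic --- its leading part is that of $\Box_b\overline{\Box}_b$, which is not elliptic --- the spectral gap above $0$ and, especially, its quantitative dependence on the Tanaka--Webster curvature are not automatic. I would extract the gap from embeddability via Kohn's closed-range theorem for $\bar\partial_b$, which gives a subelliptic estimate for $\Box_b^t$, then bound $\lp P_4^t\varphi,\varphi\rp$ below by $\lp\Box_b^t\overline{\Box}_b^t\varphi,\varphi\rp$ minus torsion terms and absorb the lower-order contributions using $\min_M R^t \geq c$ in the spirit of the CR Bochner--Lichnerowicz identities. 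Keeping all constants uniform in $t$ is the delicate point; it is possible precisely because the family is real-analytic on a compact interval with continuous Szeg{\H o} projectors and a uniform curvature bound, and conversely a family along which $\delta(t) \to 0$ is exactly the degeneration that would let a nontrivial supplementary space appear --- the mechanism behind Theorems~\ref{thm:2.1} and~\ref{thm:2.36}.
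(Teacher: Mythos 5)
First, a caveat: this paper does not actually prove Theorem~\ref{thm:main_thm}; it is recalled from the companion paper \cite{CaseChanilloYang2014}, so there is no in-text proof to compare against line by line. Judged on its own terms, your continuity-method skeleton (show the set $I$ of good parameters is open and closed) is a reasonable guess at the architecture, but the proposal has a genuine gap at exactly the point where all the real work lives: the claimed lemma $\delta(t)\geq\Phi\bigl(\min_M R^t\bigr)>0$ for every member of the family with $W^t=\{0\}$, with $\Phi$ depending only on curvature and ``other geometric quantities.'' If such a lemma held, it would show that every embedded CR three-manifold with $R>0$ and $W=\{0\}$ has $P_4\geq 0$ with a quantitative spectral gap, independently of hypotheses (3) and (5); but having $\ker P_4=\mP$ does not even imply $P_4\geq 0$, and the known eigenvalue estimates in this circle of ideas (\cite{CaoChang2007}, \cite{ChanilloChiuYang2013}) that convert a lower bound on $R$ into spectral information all take nonnegativity of $P_4$ as an \emph{input}. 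Your proposed derivation --- bounding $\lp P_4\varphi,\varphi\rp$ below by the $\Box_b\overline{\Box}_b$ term and ``absorbing'' torsion using $R\geq c$ --- cannot work unconditionally, since positive Tanaka--Webster curvature alone does not control the sign of $P_4$. What is actually needed is a bootstrap: \emph{assuming} $P_4^t\geq 0$ on a subinterval, derive the uniform gap from \eqref{eqn:uniform_curv}, and then propagate; your write-up asserts the gap as a free-standing lemma and thereby begs the question.

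The second problem is the openness step. You assert that ``the bottom of the spectrum of $P_4^t|_{(\mP^t)^\perp}$ cannot jump down, since $P_4^t$ and $(\mP^t)^\perp$ vary continuously.'' This is precisely the failure mode the present paper is about: $\dim W^t$ is only lower semi-continuous, so $\{t: W^t=\{0\}\}$ is closed but need not be open, and Theorem~\ref{thm:2.1} shows that once $W^{t_0}\neq\{0\}$ for a single $t_0$ the supplementary space exists for \emph{generic} $t$. A kernel vector of $P_4^t$ outside $\mP^t$ can be born at $t$ arbitrarily close to a parameter where $W=\{0\}$, without any eigenvalue ``jumping down'' in a way detectable by naive perturbation theory of a subelliptic, non-elliptic fourth-order operator. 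The hypothesis that blocks this is the stability assumption (5), and in your argument stability appears only as an unspecified ``continuity of $\mP^t$'' used to approximate test functions --- it never does any identifiable work. The mechanism in \cite{CaseChanilloYang2014} (visible in contrapositive form in Theorem~\ref{thm:2.36} here) is different: one transports a pluriharmonic function along the family, shows its projection onto $\ker P_4^t$ must remain close to it, and uses stability to conclude that this projection is again pluriharmonic, which is what forces $W^t=\{0\}$; the finite-rank operators $A^t$ and the real-analyticity of $t\mapsto\lp A^t\phi,\psi\rp$ control the rank of $W^t$ as in the proof of Theorem~\ref{thm:2.1}. Until you (i) replace the curvature-coercivity lemma by a statement proved under the running assumption $P_4^t\geq 0$, and (ii) exhibit concretely where stability excludes the birth of supplementary kernel vectors at the boundary of $I$, the proposal is a plausible outline rather than a proof.
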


\begin{remark}
Let $Y[J]$ denote the CR Yamabe constant of a compact CR manifold; that is,
\[ Y[J] := \inf_\theta \left\{ \int R\,\theta\wedge d\theta \colon \int \theta\wedge d\theta 
= 1 \right\} \]
with the infimum taken across all contact forms on $(M^3,\xi)$.  
The assumption \eqref{eqn:uniform_curv} can be replaced by the stronger assumption $\inf_t Y[J^t] \geq c > 0$, in which case the assumptions of Theorem~\ref{thm:main_thm} are all CR invariant.
\end{remark}

In view of \cite{ChanilloChiuYang2013} the hypotheses of theorem (1.2) are satisfied by the family
of ellipsoids in $\bC^2$.

Theorem~\ref{thm:main_thm} shows that the stability of the CR pluriharmonic functions plays a role in preventing the existence of the supplementary space.  If one wishes to use deformations to exhibit examples of CR manifolds for which the supplementary space exists, one should thus look at unstable families.  In the next section, we give two conditions which guarantee the existence of the supplementary space.


\section{The Supplementary Space and Instability}
\label{sec:baire}

Using estimates from~\cite{CaseChanilloYang2014}, we prove two results about the supplementary space for families of embedded CR manifolds without imposing a stability assumption.

First, using the Baire Category Theorem, we show that generically the supplementary space exists.

\begin{thm}
\label{thm:2.1}
Let $(M^3,J^t,\theta)$ be a family of embedded CR manifolds with $t\in[-1,1]$.  Assume that the Szeg{\H o} projector $S^t\colon F^{2,0}\to(\ker \bar\partial_b^t\subset F^{2,0})$ varies real analytically in the deformation variable $t$ (see~\cite{GarfieldLee1998} for a definition of $F^{2,0}$).  Then
\begin{enumerate}
\item $n_0 := \displaystyle\sup_{t\in[-1,1]} \dim W^t < \infty$.
\item The set
\[ F := \left\{ t\in[-1,1] \colon \dim W^t \leq n_0 - 1 \right\} \]
is a closed set with no accumulation points.  In particular, $F$ has no interior points and hence $F$ is of the first category.
\item The set
\[ E := \left\{ t\in[-1,1] \colon \dim W^t = n_0 \right\} \]
has nonempty interior.
\end{enumerate}
\end{thm}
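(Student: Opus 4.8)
The plan is to exploit the fact that, under the hypothesis that $S^t$ varies real analytically, the fourth-order operators $P_4^t$ fit together into a real-analytic family of Fredholm-type operators, so that $\dim W^t$ behaves like the dimension of the kernel of a real-analytic family. First I would recall from the elementary proof of~\eqref{eqn:supplementary} (Lemma~2.2 of~\cite{CaseChanilloYang2014}) that $W^t$ can be described as the orthogonal complement, inside $\ker P_4^t$, of $\mP^t = \ker P_3^t$; and that the estimates of~\cite{CaseChanilloYang2014} give a bound on $\dim W^t$ in terms of the relevant Szeg\H{o} projector and a finite number of low ``eigenvalues''. Since $S^t$ is real analytic in $t$ on a compact interval, these data are controlled uniformly, which yields part~(1): $n_0 := \sup_t \dim W^t < \infty$. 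This is essentially Hsiao's finiteness~\cite{Hsiao2014} made uniform along the family, and I expect it to follow by a compactness/continuity argument once the real-analytic dependence is set up correctly.

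For parts~(2) and~(3) the key point is \emph{upper semicontinuity} of $t\mapsto \dim W^t$ together with real analyticity. I would argue that $\dim W^t \leq n_0 - 1$ can only happen on a set with no accumulation points: if $t_k \to t_\infty$ with $\dim W^{t_k} \leq n_0-1$, then a normal-families / analytic-perturbation argument (Kato-style, using that $S^t$ and hence $P_4^t$ depend analytically on $t$) would force $\dim W^{t_\infty} \leq n_0 - 1$ as well in a punctured neighborhood, contradicting maximality of $n_0$ unless the jump set is discrete. Concretely, near any $t_\infty$ one can choose analytic families of ``eigenfunctions'' whose associated eigenvalues are analytic functions of $t$; the dimension of $W^t$ is the number of these branches that vanish identically, and the remaining branches vanish only at isolated points. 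Hence $F$ is closed with no accumulation points, so it has empty interior and is of the first category; this is part~(2). Part~(3) is then immediate: $E = [-1,1]\setminus F$ is open and nonempty (it contains a neighborhood of any point realizing $\dim W^t = n_0$, which exists by definition of $n_0$ as a supremum that is attained by the semicontinuity argument), so $E$ has nonempty interior.

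The main obstacle, I expect, is setting up the analytic-perturbation framework rigorously: $P_4^t$ is not elliptic (its leading part is hyperbolic, as noted after~\eqref{eqn:P4_defn}), so one cannot directly invoke Kato's analytic perturbation theory for the resolvent. The right move is to work not with $P_4^t$ itself but with the finite-dimensional object cut out by the Szeg\H{o} projector: following~\cite{CaseChanilloYang2014}, one uses $S^t$ to reduce the kernel problem to an analytically varying finite-rank family (or to the spectral side of an auxiliary subelliptic operator such as the Kohn Laplacian $\Box_b^t$, whose relevant spectral data vary analytically when $S^t$ does). Once the problem is phrased as the kernel of a real-analytic family of finite matrices (or finite-rank operators), the semicontinuity and discreteness statements are standard facts about real-analytic matrix families, and the Baire-category conclusion in~(2) is purely formal. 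I would therefore structure the proof as: (i) reduce to a real-analytic finite-rank family via $S^t$ and the estimates of~\cite{CaseChanilloYang2014}; (ii) deduce the uniform bound~(1); (iii) apply analyticity of the eigenvalue branches to get discreteness of $F$ and hence~(2); (iv) observe~(3) follows formally.
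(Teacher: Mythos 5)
Your overall strategy --- reduce via the Szeg\H{o} projector to a real-analytic family of finite-rank operators and then use real-analyticity to make the exceptional set discrete --- is in fact the paper's: it applies \cite[Lemma~2.28]{BurnsEpstein1990} to the finite-rank operators $A^t$ of \cite[Lemma~2.9]{CaseChanilloYang2014}, whose matrix entries $h(t)=\langle A^t\phi,\psi\rangle$ are real analytic in $t$. But there is a genuine error in how you run the semicontinuity step. You declare that ``the key point is upper semicontinuity of $t\mapsto\dim W^t$,'' and your eigenvalue-branch picture treats $W^t$ as the kernel of an analytically varying family, whose dimension would indeed be upper semicontinuous (generically minimal, jumping up on a discrete set). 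That is the wrong direction, and it would prove the opposite of the theorem: with upper semicontinuity, $F=\{t:\dim W^t\le n_0-1\}$ is \emph{open} and $E$ is closed, so the generic value of $\dim W^t$ would be its minimum rather than $n_0$. The fact actually needed, which the paper takes from \cite[Lemma~2.8 and Corollary~2.11]{CaseChanilloYang2014}, is that $\dim W^t=\rank A^t$ and is therefore \emph{lower} semicontinuous; hence $F$ is closed and $E$ is open. The discreteness of $F$ then comes not from eigenvalue branches (note also that ``the number of branches vanishing identically'' would be constant in $t$, so it cannot equal $\dim W^t$) but from the Burns--Epstein observation that some $n_0\times n_0$ minor of $A^t$ is a real-analytic function of $t$ not identically zero (the rank attains $n_0$ somewhere), so its zero set, which contains $F$, has no accumulation points.

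Relatedly, your argument for part~(2) as written (``\dots would force $\dim W^{t_\infty}\le n_0-1$ in a punctured neighborhood, contradicting maximality of $n_0$\dots'') is not a contradiction: $n_0$ is a supremum over all of $[-1,1]$ and can be attained far from $t_\infty$. The correct mechanism is the minor argument above. Parts~(1) and~(3) of your sketch are fine once the identification $\dim W^t=\rank A^t$ and lower semicontinuity are in place: finiteness of $n_0$ is the conclusion of the Burns--Epstein lemma applied to the real-analytic family $A^t$, and $n_0$ is attained simply because it is a finite supremum of nonnegative integers, so $E$ is nonempty and open.
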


\begin{remark}
The theorem above states that for generic values $t$ of the deformation parameter, $\dim W^t=n_0$.  Since $n_0>0$ if there exists a $t_0\in[-1,1]$ with $W^{t_0}\not=\{0\}$, the supplementary space exists for a generic value of $t$ if it exists for some $t_0$.  Moreover, if $n_0>0$, then $\dim W^t=0$ for a thin set $F\subset[-1,1]$ of the first category.
\end{remark}

\begin{proof}

To prove the theorem we apply \cite[Lemma~2.28]{BurnsEpstein1990}.  The statement of 
\cite[Lemma~2.28]{BurnsEpstein1990} is for deformation parameters that are complex analytic, 
but we here apply it to the finite rank operators $A^t$ constructed in 
\cite[Lemma~2.9]{CaseChanilloYang2014} 
which depend the real variable $t$.  The assumption that $J^t$ and $S^t$ depend 
real-analytically on $t$ yields that the function $h(t)=\lp A^t\phi,\psi\rp$ from the 
proof of \cite[Lemma~2.9]{CaseChanilloYang2014} is real analytic in $t$.  
Thus the zeros of $h(t)$ are isolated and we conclude exactly as in 
\cite[Lemma~2.28]{BurnsEpstein1990} that
\[ \sup_{t\in[-1,1]} \rank A^t = \sup_{t\in[-1,1]} \dim W^t < \infty , \]
where we use \cite[Lemma~2.8]{CaseChanilloYang2014} to establish the first equality.  
This proves the first 
claim.

Next, \cite[Corollary 2.11]{CaseChanilloYang2014} implies that $\dim W^t$ is lower 
semi-continuous, and 
thus the set $F$ is closed.  By the conclusion of \cite[Lemma~2.28]{BurnsEpstein1990}, 
the set
\[ \left\{ t\in[-1,1] \colon \rank A^t < n_0 \right\} 
= \left\{ t\in[-1,1] \colon \rank A^t\leq n_0-1 \right\} \]
has no accumulation points.  This and \cite[Lemma 2.8]{CaseChanilloYang2014} yield the 
second claim.

Finally, note that
\begin{equation}
\label{eqn:sets}
\left\{ t\in[-1,1] \colon \dim W^t > n_0-1\right\} = 
\left\{ t\in[-1,1] \colon \rank A^t > n_0 - 1 \right\}
\end{equation}
cannot be empty; if it were, this would contradict the definition of $n_0$.  
Since $\dim W^t$ is lower semi-continuous, the sets~\eqref{eqn:sets} are open and nonempty, 
yielding the last claim.
\end{proof}

Second, we show that, under an assumption on the rate of vanishing of the 
first eigenvalue of the CR Paneitz operator for a 
family of CR structures, the loss of stability of the CR pluriharmonic functions 
implies the existence of the supplementary space.  To make this precise, we list our 
assumptions.

Let $(M^3,J^t,\theta)=:M^t$ be a family of embedded CR manifolds for which $J^t$ is $C^6$ in the deformation parameter $t$ for some interval $\lv t-t_0\rv < \mu$ with $\mu>0$.  Suppose that there is a constant $c>0$ independent of $t$, such that:
\begin{enumerate}
\item For any $t\not=t_0$ and any $f\perp\ker P_4^t$, it holds that
\begin{subequations}
\label{eqn:2.32}
\begin{equation}
\label{eqn:2.32a}
\lv t-t_0\rv \eta\left(\lv t-t_0\rv\right) \lV f\rV_2 \leq \lV P_4^t f\rV_2,
\end{equation}
where $\eta(s) \rightarrow \infty$ as $s$ tends to zero.
\item For any $f\perp\ker P_4^{t_0}$, it holds that
\begin{equation}
\label{eqn:2.32b}
c \lV f\rV_2 \leq \lV P_4^{t_0}f \rV_2 .
\end{equation}
\end{subequations}
\end{enumerate}
Note that the inequality~\eqref{eqn:2.32b} was observed by Cao and Chang~\cite{CaoChang2007} for a fixed embedded CR manifold.  Together, the assumptions~\eqref{eqn:2.32a} and~\eqref{eqn:2.32b} imply that the lowest nonzero absolute value of the eigenvalues of the CR Paneitz operator $P_4^t$ jumps up as $t\to t_0$.

Next, we assume there is a family of diffeomorphisms $\Phi^t\colon M^t \to M^0:=M^{t_0}$ which is $C^6$ in the deformation parameter $t$ and is such that $\Phi^0$ is the identity map.

Lastly, we assume that the CR pluriharmonic functions are unstable at $t_0$.  More precisely, we assume that there is a CR pluriharmonic function $f_0\in C^5$ for the structure $M^0$ and constants $\varepsilon>0$ and $0<\delta<\mu$ such that for any $t$ with $\lv t-t_0\rv<\delta$ and any CR pluriharmonic function $\psi\in\mP^t$, it holds that
\begin{equation}
\label{eqn:2.34}
\lV f_0-\psi\rV_2 \geq \varepsilon .
\end{equation}


\begin{thm}
\label{thm:2.36}
Assume $(M^3,J^t,\theta)$ is as described above.  Then for all $t\not=t_0$ with $\lv t-t_0\rv<\delta$ and $\delta$ sufficiently small, the supplementary space $W^t$ exists.
\end{thm}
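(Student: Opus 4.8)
The plan is to produce, for each $t \neq t_0$ with $\lv t-t_0\rv$ small, a function lying in $\ker P_4^t$ but not in $\mP^t$; by \eqref{eqn:supplementary} (which applies since the $M^t$ are embedded) this is exactly the assertion $W^t \neq \{0\}$. The candidate is the $L^2$-orthogonal projection $u^t$ of $f_0$ onto the closed subspace $\ker P_4^t\subseteq L^2(M)$ (closed because $P_4^t$ is a differential operator). Since $f_0$ is CR pluriharmonic for $M^{t_0}$, we have $f_0\in\mP^{t_0}\subseteq\ker P_4^{t_0}$, so $P_4^{t_0}f_0=0$; the crux will be that $f_0$ stays $L^2$-close to $\ker P_4^t$ as $t\to t_0$, while \eqref{eqn:2.34} keeps $f_0$ at $L^2$-distance at least $\varepsilon$ from $\mP^t$. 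The only way to reconcile these is for $\ker P_4^t$ to be strictly larger than $\mP^t$.

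Write $f_0=u^t+v^t$ with $u^t\in\ker P_4^t$ and $v^t\perp\ker P_4^t$. The key estimate I would establish is that there is a constant $C$, independent of $t$, with $\lV P_4^t f_0\rV_2\leq C\lv t-t_0\rv$ for $\lv t-t_0\rv$ small. Granting it, since $P_4^t u^t=0$ we have $P_4^t v^t=P_4^t f_0$, so \eqref{eqn:2.32a} applied to $v^t$ yields
\begin{equation*}
\lv t-t_0\rv\,\eta\!\left(\lv t-t_0\rv\right)\lV v^t\rV_2 \;\leq\; \lV P_4^t v^t\rV_2 \;=\; \lV P_4^t f_0\rV_2 \;\leq\; C\,\lv t-t_0\rv,
\end{equation*}
hence $\lV v^t\rV_2\leq C/\eta(\lv t-t_0\rv)\to 0$ as $t\to t_0$, using $\eta(s)\to\infty$ as $s\to 0$. \textbf{Proving the displayed estimate on $\lV P_4^t f_0\rV_2$ is the main obstacle.} It amounts to showing that $t\mapsto P_4^t f_0$ is a Lipschitz curve in $L^2(M)$ vanishing at $t_0$. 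Using the $C^6$ family $\Phi^t$ (the identity at $t_0$) one may regard every $P_4^t$ as acting on the single space $L^2(M^{t_0})$ and speak of its dependence on $t$; then, because $P_4$ is fourth order with coefficients involving at most two spatial derivatives of the pseudohermitian data and $J^t$ is $C^6$ in $t$, the $t$-derivative of $P_4^t$ has coefficients involving only finitely many spatial derivatives of $\partial_t J^t$, so $\partial_t(P_4^t f_0)$ again lies in $L^2$ (here $f_0\in C^5$ is used), and the fundamental theorem of calculus in $t$ gives the bound.

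To conclude, shrink the $\delta$ from the instability hypothesis, if necessary, so that in addition $C/\eta(\lv t-t_0\rv)<\varepsilon$ whenever $0<\lv t-t_0\rv<\delta$ (possible since $\eta(s)\to\infty$). Then $\lV f_0-u^t\rV_2=\lV v^t\rV_2<\varepsilon$. If $u^t$ were CR pluriharmonic for $M^t$, taking $\psi=u^t$ in \eqref{eqn:2.34} would force $\lV f_0-u^t\rV_2\geq\varepsilon$, a contradiction. Hence $u^t\in\ker P_4^t\setminus\mP^t$ for every such $t$, so $W^t\neq\{0\}$ in \eqref{eqn:supplementary} and the supplementary space exists. (This deduction appears to use only \eqref{eqn:2.32a} and \eqref{eqn:2.34}; the bound \eqref{eqn:2.32b} is what makes the hypotheses natural, expressing that the bottom of the nonzero spectrum of $P_4^t$ genuinely jumps at $t_0$, but it does not seem to be needed in the argument itself.)
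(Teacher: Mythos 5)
Your argument is correct and follows essentially the same route as the paper's proof: transport $f_0$ to $M^t$, split it into its projection onto $\ker P_4^t$ plus an orthogonal remainder, bound the $L^2$ norm of $P_4^t$ applied to the transported function by $C\lv t-t_0\rv$ using $P_4^{t_0}f_0=0$ and the $C^6$ dependence on $t$, apply \eqref{eqn:2.32a} to force the remainder below $\varepsilon$, and invoke \eqref{eqn:2.34} to rule out the projection being CR pluriharmonic. Your parenthetical observation is also accurate: the paper's proof likewise only ever invokes \eqref{eqn:2.32a}, not \eqref{eqn:2.32b}.
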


\begin{proof}

Set $u_t=f_0\circ\Phi^t$.  By assumption, $u_t\in C^5$.  We claim that $u_t$ is not perpendicular to $\ker P_4^t$.  If instead $u_t\perp\ker P_4^t$, then~\eqref{eqn:2.32a} gives
\begin{equation}
\label{eqn:2.38}
\lv t-t_0\rv \eta\left(\lv t-t_0\rv\right) \lV u_t\rV_2 \leq \lV P_4^t u_t\rV_2 .
\end{equation}
On the other hand, using the fact that $f_0\in\mP^{t_0}$, we have that
\[ \lV P_4^tu_t\rV_2 \leq \left\lV\left(P_4^t-P_4^{t_0}\right)u_t\right\rV_2 + \left\lV P_4^{t_0}\left(u_t-f_0\right)\right\rV_2 . \]
Applying~\eqref{eqn:2.38} and the definitions of $\Phi^t$ and $u_t$ thus yields
\[ \lv t-t_0\rv \eta\left(\lv t-t_0\rv\right) \lV u_t\rV_2 \leq c\lv t-t_0\rv . \]
Recalling that $\eta\left(\lv t-t_0\rv\right) \rightarrow \infty$ as $t\to t_0$, we have that $\lV u_0\rV_2=\lV f_0\rV_2=0$, a contradiction.

Now assume that the projection of $u_t$ onto $\ker P_4^t$ is a CR pluriharmonic function $\psi_t\in\mP^t$.  We may write
\begin{equation}
\label{eqn:2.39}
u_t = v_t + \psi_t
\end{equation}
with $v_t\perp\ker P_4^t$.  Since $\psi_t$ is CR pluriharmonic,
\begin{equation}
\label{eqn:2.40}
P_4^tu_t = P_4^tv_t .
\end{equation}
On the other hand, by using the fact that $f_0\in\mP^{t_0}$ as above, we find that
\begin{equation}
\label{eqn:2.41}
\lV P_4^tu_t\rV_2 \leq c\lv t-t_0\rv .
\end{equation}
Since $v_t\perp\ker P_4^t$ we may apply~\eqref{eqn:2.32} together with~\eqref{eqn:2.40} and~\eqref{eqn:2.41} to find that
\[ \lv t-t_0\rv \eta\left(\lv t-t_0\rv\right) \lV v_t\rV_2 \leq \lV P_4^tv_t\rV_2 \leq c\lv t-t_0\rv . \]
In particular,
\begin{equation}
\label{eqn:2.42}
\lV v_t\rV_2 < \varepsilon
\end{equation}
for $\lv t-t_0\rv<\delta$.  Now,
\[ f_0-\psi_t = f_0-u_t+v_t . \]
Hence
\begin{equation}
\label{eqn:2.43}
\lV f_0-\psi_t\rV_2 \leq \lV f_0 - u_t\rV_2 + \lV v_t\rV_2 .
\end{equation}
By~\eqref{eqn:2.42} and the construction of $u_t$, we conclude that $\lV f_0-\psi_t\rV_2=o(1)$ for $t\to t_0$.  This contradicts the stability assumption~\eqref{eqn:2.34}.  Hence the projection of $u_t$ onto $\ker P_4^t$ cannot be a CR pluriharmonic function, whence $W^t\not=\{0\}$.
\end{proof}

\bibliographystyle{abbrv}

\newcommand{\noopsort}[1]{}

\end{document}